\documentclass[12pt]{article}  
\usepackage{color,amssymb,amsthm,amsmath,amsxtra,amsfonts,breqn,
fullpage,graphicx,hyperref,float,appendix,mathtools,eqparbox, 
lscape,pdfpages,comment,framed,chngpage}

\usepackage[most]{tcolorbox}
\usepackage[utf8]{inputenc}
\usepackage{tikz}
\usetikzlibrary{shapes,arrows}

\usepackage{fancyhdr}
\usepackage{xcolor}
\definecolor{dark-red}{rgb}{0.4,0.15,0.15}
\definecolor{dark-blue}{rgb}{0,0,0.45}
\definecolor{dark-olive-green}{rgb}{0.333,0.42,0.184}
\definecolor{magenta}{rgb}{1.,0.,1.}
\hypersetup{colorlinks, linkcolor={dark-blue},citecolor={blue}, urlcolor={blue}}

\definecolor{gold}{rgb}{0.85,0.66,0.0}

\newtheorem{theorem}{Theorem}

\newtheorem{example}{Example}[section]

\usepackage[ddmmyyyy,hhmmss]{datetime}

\begin{document}

\title{Rational Base Descent: A Deterministic Algorithm for Factoring Structured Semiprimes}
\author{Sam Blake\\ {\small\texttt{samuel [dot] thomas [dot] blake [at] gmail [dot] com} }}
\date{\today}
\maketitle

\begin{abstract}
We present a special-purpose algorithm for factoring semiprimes $N = pq$ in which one prime factor satisfies $p \approx c\,(a/b)^n$ for positive integers $a, b, c, n$ with $a > b$ and $\gcd(a,b) = 1$. Given the correct parameters $(a, b)$, the algorithm isolates a factor in $\mathcal{O}(\log^3 N)$ time when $a/b$ is bounded away from $1$, and the cofactor $q$ is unconstrained beyond a mild size bound. We describe a search strategy over $(a, b)$ using primitivity filters, give a complexity analysis showing that the method poses no threat to balanced RSA semiprimes, and provide a \texttt{gmpy2}-based Python implementation.
\end{abstract}

\section{Introduction}
Integer factorisation---decomposing a composite integer into its prime factors---is among the oldest problems in computational number theory. The Fundamental Theorem of Arithmetic guarantees that every integer greater than one is either prime or has a unique factorisation into primes, and for centuries the construction of factor tables was a manual exercise: Cataldi tabulated factors up to 800 in 1603, Chernac extended this to 1,020,000 in 1811, and D.~N.~Lehmer's tables~\cite{lehmer1909}, published over a century ago, reached 10,017,000.\\

Modern interest in the problem is driven by cryptography. The presumed difficulty of factoring large semiprimes underlies the RSA cryptosystem~\cite{rsa}, whose security rests on the asymmetry between multiplying two primes $p$ and $q$ and recovering them from the product $N = pq$~\cite{cohn}.\\

Factoring algorithms are typically classified as general-purpose or special-purpose. General-purpose algorithms have runtimes depending primarily on the size of $N$, independent of the structure of its factors. The Continued Fraction Factorisation method (CFRAC)~\cite{morrison1975} was the first to achieve sub-exponential complexity, and the General Number Field Sieve (GNFS)~\cite{buhler1993} remains the asymptotically fastest known algorithm for arbitrary $N$.\\

Special-purpose algorithms instead exploit specific structure in the prime factors. Pollard's $p-1$ algorithm succeeds when $p-1$ is smooth; the Elliptic Curve Method (ECM)~\cite{lenstra1987} generalises this idea. Fermat's method is fast when $p$ and $q$ are close to one another~\cite{lehman1974}, and in 1895 Lawrence extended it to the case where $p/q$ is well-approximated by a known rational $u/v$~\cite{lawrence1895}.\\

More recently, Hart~\cite{hart2012} gave a one-line factoring algorithm: a Fermat-style variant whose heuristic complexity is $\mathcal{O}(N^{1/3})$ on general inputs, but which becomes very fast on a sparse class of semiprimes in which \emph{both} prime factors lie near integer powers of a shared base. Specifically, Hart shows that semiprimes of the form $\text{nextprime}(c^a \pm d_1)\times \text{nextprime}(c^b \pm d_2)$, with $c, d_1, d_2$ small and $a, b$ relatively close, can be factored by his method even when the primes have many thousands of digits.\\

The present paper extends this fast-factorable class along two independent axes. First, the integer base $c$ is replaced by a rational base $a/b$, which is dense in $\mathbb{R}_{>0}$ rather than sparse, so the set of approximating bases at any fixed scale is dramatically larger. Second, structure is required only on the prime factor $p$: writing $p \approx c\,(a/b)^n$, the cofactor $q$ is unconstrained beyond a mild size bound. The mechanism is also different: rather than searching for square residues in the style of Fermat, our algorithm performs a deterministic descent in which $Q_k = \lfloor (b/a)\,Q_{k-1}\rfloor$ converges to a known integer multiple of $q$, which is then recovered by a small gcd search.

\section{The Rational Base Descent Algorithm}

The algorithm takes as input a composite integer $N$ and a coprime pair $(a, b)$ with $a > b \ge 1$, representing the rational base $a/b$. It proceeds as follows.

\begin{enumerate}
    \item[\textbf{R1.}] \textbf{[Initialise.]} Set $Q \leftarrow N$.
    \item[\textbf{R2.}] \textbf{[Descend.]} Replace $Q$ by $\lfloor (b\, Q)/a \rfloor$. If $Q < 2$, terminate and return failure: the factor $p$ is not adequately approximated by $a/b$.
    \item[\textbf{R3.}] \textbf{[Search.]} Let $J = \lceil 2 + a/(a-b) \rceil$. For each integer $j \in [-J, J]$:
    \begin{enumerate}
        \item Compute $g \leftarrow \gcd(Q - j, N)$.
        \item If $1 < g < N$, return $g$.
    \end{enumerate}
    \item[\textbf{R4.}] \textbf{[Loop.]} If no factor was found, return to step R2.
\end{enumerate}

\section{Analysis}

The correctness of the algorithm rests on the fact that, after the appropriate number of descent steps, the working variable $Q_n$ lies within a bounded distance of the integer multiple $c\,q$. The theorem below makes this precise.

\begin{theorem}\label{main_theorem}
Let $N = p\,q$ be a semiprime with distinct primes $p$ and $q$, and let $a, b, c, n$ be positive integers with $a > b$ and $\gcd(a,b) = 1$. Define
$$\Delta \;=\; p - \left\lfloor c \left(\frac{a}{b}\right)^n \right\rfloor.$$
If $(a/b)^n > q$ and
$$|\Delta| \;<\; \frac{(a/b)^n}{q},$$
then the Rational Base Descent algorithm, run with the parameters $(a, b)$, returns a non-trivial factor of $N$.
\end{theorem}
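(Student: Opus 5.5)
The plan is to track the working variable exactly and show that after $n$ descent steps it lies within $J$ of $c\,q$. Then the gcd search in step R3 hits $Q_n - j = c\,q$, and $\gcd(c\,q, N)$ is a non-trivial factor. Write $r = a/b > 1$ and let $Q_k$ be the value of $Q$ after the $k$-th execution of R2, so $Q_0 = N$ and $Q_k = \lfloor Q_{k-1}/r \rfloor$. If the algorithm returns during some earlier pass of R3, the returned $g$ satisfies $1<g<N$ and we are done. So it suffices to show that the $n$-th pass is reached and succeeds.

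\textbf{Step 1 (descent error).} I will prove by induction that
$$N r^{-k} - \sum_{i=0}^{k-1} r^{-i} \;<\; Q_k \;\le\; N r^{-k}.$$
The upper bound follows from $\lfloor x\rfloor \le x$. For the lower bound, $Q_k > Q_{k-1}/r - 1$; substitute the inductive bound for $Q_{k-1}$. The geometric sum is less than $\frac{1}{1-b/a} = \frac{a}{a-b}$, so $|Q_n - N r^{-n}| < \frac{a}{a-b}$.

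\textbf{Step 2 (main term).} Write $\lfloor c r^n \rfloor = c r^n - \theta$ with $\theta\in[0,1)$, so $p = c r^n - \theta + \Delta$. Then
$$\frac{N}{r^n} = c\,q + \frac{q(\Delta - \theta)}{r^n}.$$
The hypothesis $|\Delta| < r^n/q$ gives $|q\Delta/r^n| < 1$. The hypothesis $r^n > q$ gives $0 \le q\theta/r^n < 1$. Hence $|N r^{-n} - c q| < 2$, and combining with Step 1, $|Q_n - c q| < 2 + \frac{a}{a-b} \le J$. Since $Q_n$ and $cq$ are integers, $j_0 = Q_n - cq$ is an integer in $[-J, J]$, and the search in R3 tests it.

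\textbf{Step 3 (the gcd is non-trivial).} We have $\gcd(cq, N) = q\cdot\gcd(c,p)$, so it equals $q$ unless $p \mid c$. To exclude $p\mid c$, use the size relation: $p \ge c r^n - 1 - |\Delta| > c r^n - 1 - r^n/q$, which together with $r^n > q \ge 2$ forces $p$ to be comparable to $c r^n > c$. In particular $c < p$ except in degenerate tiny cases, which I would check directly. Thus $g = q$ with $1<q<N$.

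\textbf{Main obstacle.} The hardest step is verifying that the algorithm does not terminate in R2 with $Q<2$ before reaching the $n$-th step. Since $Q_k$ is non-increasing, it suffices to show $Q_n \ge 2$. Step 2 only gives $Q_n > cq - J$, which could in principle drop below $2$ when $a/(a-b)$ is large relative to $cq$. My first attempt is to sharpen the lower bound instead of using $J$. The floor errors and the error $q\theta/r^n$ only push $Q_n$ down by a bounded amount, so I would aim to show $Q_n \ge cq - 1 - \frac{a}{a-b}$. I would then argue that $cq$ exceeds this whenever the hypotheses hold with $q \ge 2$. If a genuine edge case remains, with $r$ very close to $1$ and $cq$ tiny, I would note it explicitly as requiring $cq > 3 + \frac{a}{a-b}$. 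That condition is implied in any non-trivial regime, since $r^n > q$ and $N$ large force $n$, hence $Q_n$, to be large.
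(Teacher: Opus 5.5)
Your Steps 1--3 are the paper's proof: the same geometric bound $a/(a-b)$ on the accumulated truncation error, the same split of $Nr^{-n}-cq$ into a $\Delta$-term and a $\theta$-term each of size less than $1$, and the same identity $\gcd(cq,N)=q\gcd(c,p)$. Step 3 needs no case check: $p>r^n(c-1/q)-1>q(c-1/q)-1=cq-2$, so $p\ge cq-1\ge 2c-1>c$ when $c\ge 2$, and $c=1$ is trivial. The real problem is the one you flag as the main obstacle, and the paper's proof never addresses it. It shows $|Q_n-cq|\le J$ but never checks that the run survives the test $Q<2$ in R2 up to iteration $n$. Your proposed resolution does not close this gap, for two reasons:
\begin{enumerate}
\item The sharper bound $Q_n\ge cq-1-a/(a-b)$ does not follow from your accounting. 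A negative $\Delta$ lowers $Nr^{-n}$ by almost $1$ in addition to the $q\theta/r^n$ term. What you actually get is $Q_n>cq-2-a/(a-b)$.
\item The claim that large $N$ forces $Q_n$ to be large is false. We have $Q_n\approx Nr^{-n}\approx cq$ no matter how large $n$ is.
\end{enumerate}

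In fact no argument can close this gap, because the statement is false as written. Take $N=14$, $p=7$, $q=2$, $(a,b)=(10,1)$, $c=n=1$. Then $\Delta=7-10=-3$, $(a/b)^n=10>q$, and $|\Delta|=3<5=(a/b)^n/q$. Yet the first pass of R2 gives $Q=\lfloor 14/10\rfloor=1<2$, and the algorithm reports failure even though $|Q_1-cq|=1\le J$. The same happens for $N=2p$ with $b=c=n=1$ and any odd prime $p\in(a/2,a)$, so $N$ can be arbitrarily large.

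The theorem needs an extra hypothesis, and the one you suggest suffices. If $cq>3+a/(a-b)$, then $Q_n>cq-2-a/(a-b)>1$, so $Q_n\ge 2$. Since the $Q_k$ are non-increasing, no earlier pass of R2 terminates. Any earlier return from R3 is a non-trivial factor by construction. Your Steps 1--3 then complete the proof.
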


\begin{proof}
We show that after exactly $n$ descent iterations of step R2 the integer $\eta := Q_n - c\,q$ satisfies $|\eta| \le J = \lceil 2 + a/(a-b) \rceil$, so the search in step R3 detects it.

Write the descent step as $Q_k = (b/a)\,Q_{k-1} - \epsilon_k$, where $\epsilon_k = \{(b/a)\,Q_{k-1}\} \in [0,1)$ is the fractional part discarded at iteration $k$. Unrolling the recurrence from $Q_0 = N$ gives
\begin{equation}
Q_n = \left(\frac{b}{a}\right)^n N - E_n, \qquad E_n = \sum_{j=1}^{n} \left(\frac{b}{a}\right)^{n-j} \epsilon_j. \label{eqn_Qn}
\end{equation}
Each $\epsilon_j \in [0,1)$ and $b/a < 1$, so the truncation error $E_n$ is bounded by the partial geometric sum
$$0 \le E_n < \sum_{i=0}^{n-1} \left(\frac{b}{a}\right)^i \;<\; \sum_{i=0}^{\infty} \left(\frac{b}{a}\right)^i \;=\; \frac{a}{a-b}.$$

Let $\delta_c = \{c(a/b)^n\} \in [0,1)$, so that $\lfloor c(a/b)^n\rfloor = c(a/b)^n - \delta_c$ and
$$p = c\left(\frac{a}{b}\right)^n - \delta_c + \Delta.$$
Substituting into~\eqref{eqn_Qn} via $N = p\,q$:
\begin{align*}
Q_n &= \left(\frac{b}{a}\right)^n p\,q - E_n \\
&= \left(\frac{b}{a}\right)^n \!\left(c\left(\frac{a}{b}\right)^n - \delta_c + \Delta\right)\!q - E_n \\
&= c\,q + \Delta \left(\frac{b}{a}\right)^n q - \delta_c \left(\frac{b}{a}\right)^n q - E_n.
\end{align*}
Thus
$$\eta = Q_n - c\,q = \Delta \left(\frac{b}{a}\right)^n q - \delta_c \left(\frac{b}{a}\right)^n q - E_n.$$

We bound each term using the hypotheses:
\begin{enumerate}
    \item $|\Delta| < (a/b)^n/q$ gives $\left|\Delta (b/a)^n q\right| < 1$.
    \item $(a/b)^n > q$ gives $(b/a)^n q < 1$, and $\delta_c \in [0,1)$, so $\left|\delta_c (b/a)^n q\right| < 1$.
    \item $|E_n| < a/(a-b)$.
\end{enumerate}
By the triangle inequality,
$$|\eta| \;<\; 2 + \frac{a}{a-b}.$$
Since $\eta = Q_n - c\,q$ is an integer, $|\eta| \le \lceil 2 + a/(a-b)\rceil = J$, so step R3 tests $j = \eta$ and computes $\gcd(Q_n - \eta, N) = \gcd(c\,q,\, p\,q) = q\,\gcd(c, p)$. Since $p \approx c\,(a/b)^n$ with $(a/b)^n > q \ge 2$, we have $p > c$, and as $p$ is prime this forces $\gcd(c, p) = 1$. Hence $\gcd(Q_n - \eta, N) = q$, a non-trivial factor of $N$.
\end{proof}

\section{Searching the Parameter Space}

A naive enumeration of pairs $(a, b)$ scales quadratically in the bound on $a + b$, so practical use of the algorithm requires a search strategy that avoids redundant work.

\subsection{Primitivity Filters}
Some rational bases are algebraically equivalent and can be safely skipped. A base $a/b$ is \emph{primitive} if it cannot be reduced or expressed as a power of a smaller base. We enforce two filters during enumeration:
\begin{itemize}
    \item \textbf{Coprimality.} We require $\gcd(a, b) = 1$. A Euclidean check before invoking the descent rejects all non-coprime pairs. The density of coprime pairs of integers is $6/\pi^2 \approx 0.6079$, so this filter eliminates roughly 39.2\% of the candidate space.\footnote{See, e.g., Hardy and Wright, \textit{An Introduction to the Theory of Numbers}, \S18.5.}
    \item \textbf{Perfect-power redundancy.} If $a = a_0^k$ and $b = b_0^k$ for some integer $k \ge 2$, then $(a/b)^n = (a_0/b_0)^{kn}$, so any vulnerability detectable with the base $a/b$ is also detectable using the primitive base $a_0/b_0$ at a different exponent. We therefore skip such pairs.
\end{itemize}

\section{Asymptotic Complexity}

The cost of one execution of the algorithm is the number of descent iterations multiplied by the per-iteration cost.

During descent, $Q_k$ shrinks by a factor of approximately $b/a$ per step, so reaching the target multiple $c\,q$ from $Q_0 = N$ requires $n \approx \log_{a/b}(p)$ iterations. For balanced semiprimes with $p \approx \sqrt{N}$, this is $n = \mathcal{O}(\log N)$. Each iteration performs at most $2J + 1$ gcd computations, where $J = \lceil 2 + a/(a-b)\rceil$, and the schoolbook Euclidean algorithm computes a gcd in $\mathcal{O}(\log^2 N)$ bit operations. The total cost is therefore
$$\mathcal{O}\!\left(\frac{a}{a-b} \cdot \log^3 N\right).$$
For any fixed rational base $a/b$ bounded away from $1$, the factor $a/(a-b)$ is constant and the algorithm runs in strictly polynomial time $\mathcal{O}(\log^3 N)$. As $a/b \to 1$ the cost grows linearly in $a/(a-b)$, and the favourable regime is bases with a moderate ratio.\\

We now estimate the size of the parameter space that must be searched to factor a generic semiprime. By the theory of Diophantine approximation, a typical real number $X$ admits rational approximations $a/b$ with $|X - a/b| \approx 1/b^2$. Setting $X = (p/c)^{1/n}$ so that $p = c\,X^n$, and writing $X = a/b + \epsilon$ with $\epsilon \approx 1/b^2$, a first-order expansion gives
$$p = c\left(\frac{a}{b}\right)^n + c\,n\left(\frac{a}{b}\right)^{n-1}\epsilon + O(\epsilon^2) \;\approx\; c\left(\frac{a}{b}\right)^n + \frac{c\,n\,a^{n-1}}{b^{n+1}},$$
so the typical magnitude of $\Delta = p - \lfloor c(a/b)^n\rfloor$ is approximately $c\,n\,a^{n-1}/b^{n+1}$. The hypothesis $|\Delta| < (a/b)^n / q$ of Theorem~\ref{main_theorem} then becomes
$$\frac{c\,n\,a^{n-1}}{b^{n+1}} \;<\; \frac{a^n}{b^n\,q}, \qquad \text{i.e.,} \qquad c\,n\,q \;<\; a\,b.$$\\

For an unstructured RSA semiprime with $p \approx q \approx \sqrt{N}$ and small $c, n$, this inequality requires $a\,b \gtrsim \sqrt{N}$. The number of coprime pairs $(a, b)$ with $a\,b \le M$ is $\Theta(M \log M)$, so enumerating such pairs up to $a\,b = \sqrt{N}$ requires $\tilde{\mathcal{O}}(\sqrt{N})$ candidates---exponential in $\log N$ and far worse than the General Number Field Sieve~\cite{buhler1993}. The Rational Base Descent algorithm is therefore no threat to balanced RSA moduli; its value lies entirely in the special case where the prime factor genuinely admits a small rational-power approximation.

\section{Examples}
\begin{example}
When $(a/b)^n \gg q$, the bound on $\Delta$ is large, and the gap between $p_{\text{min}} = \left\lfloor c\, (a/b)^n\right\rfloor - \Delta$ and $p_{\text{max}} = \left\lfloor c\, (a/b)^n\right\rfloor + \Delta$ can be enormous. For instance, with $p_{\text{min}}$, $p_{\text{max}}$ both 866-bit primes and $q$ a 255-bit prime, our algorithm factors both
{\footnotesize
\begin{multline*}
N_{\text{min}} = p_{\text{min}}\,q = 13434917067328449643383271289062122492729438008563207396013386022436439625786\\
814860310635\textcolor{gray}{0926488498087072838850794255593800606037417118877419838277025360771246065263900265}\\
\textcolor{gray}{3123642403389641779880233968185505491600758327359172114787487297101417389933443643227737228011}\\
\textcolor{gray}{2100845364029259802969568473368054409653616559493278947418005745364854816116209}\hfill
\end{multline*}
}
\noindent and 
{\footnotesize
\begin{multline*}
N_{\text{max}} = p_{\text{max}}\,q = 13434917067328449643383271289062122492729438008563207396013386022436439625786\\
814860310635\textcolor{gray}{9462325028510580369721472224479765378713002323091102360779837041712476003336356036}\\
\textcolor{gray}{2466222889720725560742981816345311565868826870566666947258686151144184980492031366549985004825}\\\textcolor{gray}{3134034455058178126780155420375132055020929262098390932828248906338838243332477}\hfill
\end{multline*}
}
with $a/b = 22/7$ in 5113 calls to Euclid's algorithm, despite $p_{\text{max}} - p_{\text{min}} \approx 2.7 \times 10^{178}$ and $N_{\text{max}} - N_{\text{min}} \approx 8.5 \times 10^{254}$. (The digits in which $N_{\text{min}}$ and $N_{\text{max}}$ differ are shown in gray.)
\end{example}

\begin{example}
The algorithm can partially factor certain Cunningham numbers. For $N = 2^{1041} + 1$ with $a/b = 2$, in less than a tenth of a second we obtain the 105-digit factor
\begin{multline*}
2866873269987589389513526119127608675995706236460351404671986049233653595110\cdots\\
60601008752319138765710819329, \hfill
\end{multline*}
which is then easily factored further into
\begin{multline*}
3 \times 9556244233291964631711753730425362253319020788201171348906620164112178650\cdots\\
3686867002917439712921903606443.\hfill
\end{multline*}
The largest of these prime factors does not appear in the prime factor tables of the Cunningham Project~\cite{cunningham,brillhart,wagstaff}.
\end{example}

\begin{example}
The algorithm also handles certain base-10 repunits~\cite{beiler}. For $N = (10^{2224}-1)/9$ with $a/b = 10$, in less than a tenth of a second we obtain the 140-digit factor
\begin{multline*}
188888888888888888888888888888888888888888888888888888888888888888888888888\cdots\\
88888888888888888888888888888888888888888888888888888888888888887, \hfill
\end{multline*}
which factors further as
\begin{multline*}
17 \times 119124859925363 \times 93272815750404359428200304166180738539061812063838459731\cdots\\
40973454310325710523151012873614569275960380714495413018734336834397.\hfill
\end{multline*}
\end{example}

\begin{example}
In 2012, Wagstaff~\cite{wagstaff2012} factored the base-173 repunit
$$\frac{173^{173} - 1}{173 - 1} = 347\times 685081 \times 161297590410850151 \times \text{P}176 \times \text{P}184,$$
where $\text{P}xxx$ denotes a prime with \textit{xxx} decimal digits. Applying our algorithm to
$$\frac{{153}^{153} + 1}{153 + 1}$$
with $a/b = 153$ quickly yields the factor
\begin{multline*}
170507014772455614072877177618780045330205133686049732535688397703000187902\cdots\\
65943188524709552486240402548858137,\hfill
\end{multline*}
giving the full prime factorisation
\begin{multline*}
\frac{{153}^{153} + 1}{153 + 1} = 13\times 409 \times 1789 \times 2647 \times 3061 \times 36451 \times 132949\times \\
1071392089 \times 11689042696587973 \times 25015188869871173 \times \\
3581210233293795917 \times \text{P}55 \times \text{P}194.
\end{multline*}
A larger example, almost certainly out of reach of the elliptic-curve method~\cite{lenstra1987}, is
\begin{multline*}
\frac{{366}^{183}+1}{366+1} = 103 \times 1297 \times 4759 \times 40993 \times 23952871 \times \\
5711029231 \times 265841463337 \times \text{P}146 \times \text{P}280.
\end{multline*}
The primes P55, P194, P146, and P280 are listed in the Appendix.
\end{example}

\section{Open Questions}

\begin{enumerate}
\item Is there a more efficient way to search through the space of $a/b$ representations of $p$?
\item Without knowing $p$, can we construct an integer $m$ such that $m\,p \approx c\,(a/b)^n \pm \Delta$, and then run the algorithm on $m\,N$? For example, $4779846103 \times \text{next\_prime}(2^{304})$ can be factored because $24473 \times \text{next\_prime}(2^{304})$ is close to a power of $3$.
\item Can a gradient-descent approach, applied to the rational-base representation of $N$, be used to recover the $a/b$ representation of $p$? For instance, the base-3 representation of
\begin{multline*}
N = p\,q = 14046133535443172227430326880457961558090460914355488360756493
\end{multline*}
is
\begin{multline*}
\noindent\textcolor{gray}{101201111101112101112100002212121221}00000000000000000000000000\textcolor{gray}{20221200001}\hfill\\
\textcolor{gray}{11220012000020102101210221100210011010022100122220111222}_3,\hfill
\end{multline*}
and the long run of zeros suggests $p \approx 3^n$.
\end{enumerate}

\newpage
\section{Challenge}

As an exercise for the cryptologically inclined reader, the following RSA-encrypted ciphertext $c$ contains a deliberate backdoor and can be broken using the algorithm presented in this paper. The message $c=$
\scriptsize
\begin{verbatim}
1814651558954106218382820026102280437671368367089800692084713095469343...
9790142400605047350523420256388241978645880720770970691940969664014449...
7487493547807943762502518167875215042507410360622552300911078217635608...
0887997102096285507800358419360885775596518636029091371178373992552096...
8622936577315796875988232583687214883888423098977624910410387860547324...
3100154799340490665701745842849161129853985769344005668634371076527914...
7214877747286461688225822696019525443314063817824945242857263753976642...
6089946816180283102898233637188686051762255774304098438442162372944935...
441911381235587297921195954096229860063709180529054299071,
\end{verbatim}
\normalsize
was encrypted using RSA with a 2048-bit public-key modulus $n = $
\scriptsize
\begin{verbatim}
2718022296796546774049853535203404108646031915851663329077780870611416...
3744845322132609618536958174809588920840107120935709320881925676005056...
1442230333011684333489839416774679760022895557020439538683012451397663...
2522502436945795318786473193615886185475646387774096668300777178598694...
9162347938828689500845728036191241787726511296084493467932612591032288...
8869558069630018710582245039859261405571781226467429263986838011424621...
5046937228443293700066476984349745568744964585088683517037751359691649...
3406542248048194462147618414261602864341399915066904968166598805918288...
235109620693254221810328224289006630686023813438006131353,
\end{verbatim}
\normalsize
and public-key exponent $e = 65537$.\footnote{Hint: once the public-key modulus $n$ has been factored as $n = p\,q$, compute Euler's totient $\phi(n) = (p-1)(q-1)$, determine the private exponent $d \equiv e^{-1} \pmod{\phi(n)}$, and recover the (ASCII-encoded) plaintext as $m \equiv c^d \pmod{n}$.}

\bibliographystyle{abbrv}

\appendix
\section{GMP-Based Python Implementation of the Rational Base Descent Algorithm}

The arithmetic underlying the algorithm routinely involves integers of more than 1000 bits, so a fixed-precision implementation is unworkable. The Python~3 implementation below uses GNU Multiple Precision Arithmetic (GMP) via the \texttt{gmpy2} binding, which keeps the inner loop running at C-level speed~\cite{gmp}.

\scriptsize
\begin{tcolorbox}[colframe=white,breakable]
\begin{verbatim}
import math
import gmpy2
from gmpy2 import mpz, is_prime

def generate_primitive_bases(max_sum):
    """Yield primitive coprime rational bases (a, b) ordered by a + b."""
    for s in range(3, max_sum + 1):
        for b in range(1, s // 2 + 1):
            a = s - b
            # Filter 1: coprimality
            if gmpy2.gcd(a, b) != 1:
                continue

            # Filter 2: skip perfect-power redundancies
            is_perfect_power = False
            for k in range(2, int(math.log2(a)) + 2):
                _, exact_a = gmpy2.iroot(a, k)
                _, exact_b = gmpy2.iroot(b, k)
                if exact_a and exact_b:
                    is_perfect_power = True
                    break

            if not is_perfect_power:
                yield (mpz(a), mpz(b))

def rational_base_descent(N, a, b, verbose=False):
    """Run the Rational Base Descent algorithm for the given (a, b).
    Returns a non-trivial factor of N, or 1 on failure."""
    Q = mpz(N)
    # Search radius J = ceil(2 + a/(a-b)), computed in pure integer arithmetic.
    search_limit = 2 + (a + (a - b) - 1) // (a - b)

    while True:
        Q = gmpy2.f_div(gmpy2.mul(b, Q), a)
        if Q < 2:
            return mpz(1)

        # Test j = 0 once, then j = 1, 2, ..., search_limit on each side.
        g = gmpy2.gcd(Q, N)
        if 1 < g < N:
            if verbose:
                print(f"[+] Factor isolated! Base a/b = {a}/{b}")
            return g

        for j in range(1, int(search_limit) + 1):
            g = gmpy2.gcd(Q - j, N)
            if 1 < g < N:
                if verbose:
                    print(f"[+] Factor isolated! Base a/b = {a}/{b}")
                return g
            g = gmpy2.gcd(Q + j, N)
            if 1 < g < N:
                if verbose:
                    print(f"[+] Factor isolated! Base a/b = {a}/{b}")
                return g

def factor_rational_base(N, max_search_sum=1000, verbose=False):
    """Drive the search over (a, b) until a factor of N is found."""
    N = mpz(N)
    if N < 2 or is_prime(N):
        return mpz(1)

    for a, b in generate_primitive_bases(max_search_sum):
        factor = rational_base_descent(N, a, b, verbose)
        if factor != 1:
            return factor

    return mpz(1)
\end{verbatim}
\end{tcolorbox}
\normalsize

\section{Prime factors from the examples}

\scriptsize
\begin{verbatim}
P55 = 6101264910861118599771515106379012943430400007766431341

P146 = 329196142441661574959637081423802218048242445408370309709554587...
4755453931389097023045175105068284254297334161008053329334947236590907...
0730027298473

P194 = 459831305373287624123499753161175453473937443042531100796506875...
4060524490410439502840647915749628498061171811274505844208461951218455...
3847398956309608916765363097365321183418791612373727394676877

P280 = 114345884924653345364772578144643176920559117942637004057824535...
1128934022852893853641130689593814423820074132054371876789339299166332...
0350194620608260957677417484507875744570119308040990990428576013534116...
9704566712544384843478977315999906035947385436280052942220084197314853...
9413623
\end{verbatim}
\normalsize

\end{document}